\newtheorem{thm}{Theorem}[section]
\newtheorem{lem}{Lemma}[section]
\newcommand{\p}{\partial}
\newcommand{\f}{\frac}
\newcommand{\up}{\textup}
\newcommand{\tr}{\triangle}
\begin{document}

\begin{frontmatter}

\title{An optimal convergence analysis of the hybrid Raviart-Thomas mixed discontinuous Galerkin method for the Helmholtz equation
}

\author[zhang]{Jiansong Zhang}
\ead{jszhang@upc.edu.cn}

\author[zhu]{Jiang Zhu\corref{io}}
\ead{jiang@lncc.br}

\address[zhang]{Department of Applied Mathematics,
        China University of Petroleum, Qingdao 266580, China}
        
\address[zhu]{Laborat\'{o}rio Nacional de Computa\c{c}\~{a}o
Cient\'{i}fica, MCTI, Avenida Get\'{u}lio Vargas 333,
 25651-075 Petr\'{o}polis, RJ, Brazil}        

\cortext[io]{Corresponding author: Jiang Zhu.}

\begin{abstract}
The hybrid Raviart-Thomas mixed discontinuous Galerkin  (HRTMDG) method is proposed for solving the Helmholtz equation. With a new energy norm, we establish the existence and uniqueness of the HRTMDG method, and give its convergence analysis. The corresponding error estimate shows that the HRTMDG method has an optimal $L^2$-norm convergence accuracy which is independent of wavenumber.

\end{abstract}

\begin{keyword}
Helmholtz equation; Hybrid mixed discontinuous Galerkin method; Raviart-Thomas elements; High wavenumber; Optimal convergence.
\end{keyword}
\end{frontmatter}

% =========================================================
\section{Introduction}

The Helmholtz boundary value problems with high wavenumber arise from many practical fields, such as, seismology, electromagnetics, underwater acoustics, medical imaging, and so on.  It is well known that traditional numerical methods do not work well for this problem and exhibit the so-called pollution effect \cite{BI1995}. 
To reduce or avoid the pollution effect, many researchers have done some various attempts, for example, \cite{ALCR08,WL2011,GM2011,FX2013,LS2017}. 

In this paper, we propose the HRTMDG method \cite{ES2010,ZZ2017,ZV2019} to solve Helmholtz equation, in which, the subproblems for the flux and unknown function are solved at element level and these variables are eliminated in favor of the Lagrange multiplier, identified as  the unknown function trace at the element interfaces, and the global system involves only the degrees of freedom associated with the multiplier, significantly reducing the computational cost. By the similar technique as in \cite{ES2010,ZZ2017},  with a new energy norm, we give the convergence analysis. The corresponding error estimate shows that the HRTMDG method has an optimal $L^2$-norm convergence accuracy which is independent of  wavenumber. 

% =========================================================

\section{Formulation of HRTMDG method}
\setcounter{equation}{0}
Here we consider the following Helmholtz equation
\begin{equation}\label{eq1}
\begin{split}
\tr  u+\kappa^2u=\tilde{f}\quad &\textrm{in}\ \Omega,\\
u=g\quad &\textrm{on}\ \p\Omega,
\end{split}
\end{equation}
where $\Omega\subset\mathbb{R}^d$ $(d=2,3)$, a Lipschitz polyhedral domain with boundary $\p\Omega$; $\kappa>0$ is the wave number, $\tilde{f}\in L^2(\Omega)$ is the source term and boundary value $g\in H^{1/2}(\Omega)$.

By introducing an unknown variable $\bm\sigma=i \nabla u/\kappa$ and $f=i\tilde{f}/\kappa$,
 we can rewrite our problem into  an equivalent first order formulation:
\begin{equation}\label{eq2}
\begin{split}
i\kappa\bm\sigma+\nabla u=0\quad &\textrm{in}\ \Omega,\\
i\kappa u+\nabla\cdot\bm\sigma =f\quad &\textrm{in}\ \Omega,\\
u=g\quad &\textrm{on}\ \p\Omega.
\end{split}
\end{equation}
If $\kappa^2$ is not an eigenvalue for the above problem, there exists a unique solution $(\bm\sigma,u)\in H(\up{div};\Omega)\times H^1(\Omega)$ of \eqref{eq2} and this solution satisfies the standard elliptic regularity (see \cite{GM2011}):
\begin{equation}\label{eq3}
\|\bm\sigma\|_{H(\up{div};\Omega)}+\|u\|_{H^1(\Omega)}\leq C\{\|f\|_{L^2(\Omega)}+\|g\|_{H^{1/2}(\p(\Omega)}\}.
\end{equation}

In order to construct our procedure, we first give a partitioning of the domain $\Omega$. Let  $\mathcal{T}_{h}$  be a  regular division into simplices $K$. Let $\mathcal{E}_{h}$ $
=\{e:e$ is an edge of $K$ for all $K\in \mathcal{T}_{h}\}$, $\mathcal{E}_{h}^{i}$ $=\{e:e$ is an interior edge of $K\}$, $\mathcal{E}_{h}^{o }=$ $\mathcal{E }_{h}\cap \partial
\Omega $, and denote $\mathbf{n}$ the unit outward normal vector to the element boundary $e$. 

Define the inner products
\[
\begin{split}
(u,v)_K:=\int_K u\bar{v}dx,\quad (u,v) _{\mathcal{T}_h}:=\sum_{K\in\mathcal{T}_h}( u, v )_K,\\
\left\langle \lambda,\mu\right\rangle_e:=\int_e  \lambda\bar{\mu} ds,\quad \left\langle  \lambda,\mu\right\rangle_{\mathcal{E}_h}:=\sum_{e\in\mathcal{E}_h}\left\langle  \lambda,\mu\right\rangle_e,
\end{split}
\]
where the bar denotes complex conjugation. For vector-valued functions, some modification is required obviously. The corresponding norms are denoted by
\[
\|u\|^2_{\mathcal{T}_h}:=( u,u)_{\mathcal{T}_h},\quad
|\mu|^2_{\mathcal{E}_h}:=\left\langle  \mu,\mu\right\rangle_{\mathcal{E}_h}.
\]

Define the finite element spaces:
\[
\mathcal{V}_{h}=\{v_h\in L^2(\Omega): v_h|_{K}\in \mathbb{P}_k(K), \forall K\in \mathcal{T}%
_{h}\},\quad
\mathcal{W}_{h}=\{{\bm\tau}_h\in H(\up{div};\Omega): {\bm\tau}_h|_{K}\in RT_k(K), \forall
K\in \mathcal{T}_{h}\}
\]
where $\mathbb{P}_k(K)$ is the space of complex polynomials of degree $\leq k$ on element $K$, $RT_k(K)=  [\mathbb{P}_k(K)]^d\oplus  x\mathbb{P}_k(K)$ denotes the Raviart-Thomas mixed finite element space.
 So the traditional mixed variational formulation can be read as
 \begin{equation}\label{eq4}
\begin{array}{rcll}
(i\kappa \bm\sigma_h, {\bm\tau}_h)_{\mathcal{T}_h}-(u_h,\nabla\cdot{\bm\tau}_h)_{\mathcal{T}_h}&=&-(g,{\bm\tau}_h\cdot\mathbf{n})_{\mathcal{E}^o_h} \quad&{\bm\tau}_h\in \mathcal{W}_h,\\[.1in]
(i\kappa u_h,v_h)_{\mathcal{T}_h}+(\nabla\cdot\bm\sigma_h,v_h)_{\mathcal{T}_h}&=&(f,v_h)_{\mathcal{T}_h}\quad &v_h\in \mathcal{V}_h.
\end{array}
\end{equation}

 As we know,  the classical mixed finite element formulation \eqref{eq4} can keep the mass conservation on the discrete level, but it leads to a saddle-point problem and involves considerably more degree of freedoms than a standard $H^1$-conforming method. The HRTMDG method \cite{ES2010,ZZ2017,ZV2019} can overcome these problems.  Its main ideas are: by adding appropriate constraints, one can use completely discontinuous piecewise polynomial functions and ensure the continuity of the normal fluxes over element interfaces.
 
To give the HRTMDG procedure of the system \eqref{eq1}, we first introduce the piecewise Sobolev spaces given by
\[
\mathcal{H}^s(\mathcal{T}_{h})=\left\{z|_K\in {H}^s(K),\ \forall\ K\in \mathcal{T}_{h}\right\},\quad s\geq 0,\quad L^2(\mathcal{E}_h)=\{\mu\in L^2(e), \forall\ e\in \mathcal{E}_h\}.
\]
Redefine the finite element approximate spaces $\mathcal{W}_h$, $\mathcal{V}_h:$
\[
\mathcal{V}_h=\{v_h\in L^2(\mathcal{T}_h):\ v_h|_K\in  \mathbb{P}_k(K),\quad\forall\ K\in\mathcal{T}_{h}\},\quad \mathcal{W}_h=\{{\bm\tau}_h\in [\mathcal{H}^k(\mathcal{T}_h)]^d:\ {\bm\tau}_h|_K\in  RT_k(K),\quad \forall\ K\in\mathcal{T}_{h}\}
\]
and define the space for Lagrange multiplier
\[
\mathcal{M}_{h}=\{ \mu_h\in L^2(\mathcal{E }_{h}): \mu_h|_{e}\in \mathbb{P}_k(e), \forall e\in \mathcal{E}^i_{h},\ \mu_h|_{e}=0,\forall e\in \mathcal{E}^0_{h}\}
\]
where $\mathbb{P}_k(e)$ is the space of complex polynomials of degree $\leq k$ on edge $e$.

The corresponding HRTMDG finite element problem can be written as follows:
 \begin{equation}\label{eq5}
\begin{array}{rcll}
(i\kappa \bm\sigma_h, {\bm\tau}_h)_{\mathcal{T}_h}-(u_h,\nabla\cdot{\bm\tau}_h)_{\mathcal{T}_h}+\langle \lambda_h,{\bm\tau}_h\cdot\mathbf{n}\rangle_{\mathcal{E}_h}&=& -(g,{\bm\tau}_h\cdot\mathbf{n})_{\mathcal{E}^o_h} \quad&{\bm\tau}_h\in \mathcal{W}_h,\\[.1in]
-(i\kappa u_h,v_h)_{\mathcal{T}_h}-(\nabla\cdot\bm\sigma_h,v_h)_{\mathcal{T}_h}&=&-(f,v_h)_{\mathcal{T}_h}\quad &v_h\in \mathcal{V}_h,\\[.1in]
\langle  \bm\sigma_h\cdot\mathbf{n}, \mu_h\rangle_{\mathcal{E}_h}&=&0\quad&\mu_h\ \in \mathcal{M}_h.
\end{array}
\end{equation}

By integrating by parts, we have

\noindent
{\bf HRTMDG method.} Find $(\bm\sigma_h,u_h,\lambda_h)\in\mathcal{W}_h\times\mathcal{V}_h\times \mathcal{M}_h$ such that
  \begin{equation}\label{eq6}
\mathcal{A}(\bm\sigma_h,u_h,\lambda_h;{\bm\tau}_h,v_h,\mu_h)=\mathcal{F}({\bm\tau}_h,v_h,\mu_h),\quad ({\bm\tau}_h,v_h,\mu_h)\in \mathcal{W}_h\times\mathcal{V}_h\times \mathcal{M}_h,
\end{equation}
where $\mathcal{A}$ and $\mathcal{F}$ are defined by
\[
\begin{split}
\mathcal{A}(\bm\sigma_h,u_h,\lambda_h;{\bm\tau}_h,v_h,\mu_h)
:=&(i\kappa \bm\sigma_h, {\bm\tau}_h)_{\mathcal{T}_h}
-(i\kappa u_h,v_h)_{\mathcal{T}_h}
+(\bm\sigma_h,\nabla v_h)_{\mathcal{T}_h}
+(\nabla u_h,{\bm\tau}_h)_{\mathcal{T}_h}
\\&+\langle \lambda_h-u_h,{\bm\tau}_h\cdot\mathbf{n}\rangle_{\mathcal{E}_h}
+\langle  \bm\sigma_h\cdot\mathbf{n}, \mu_h-v_h\rangle_{\mathcal{E}_h}
\end{split}
\]
and
\[
\mathcal{F}({\bm\tau}_h,v_h,\mu_h)= -(f,v_h)_{\mathcal{T}_h}-(g,{\bm\tau}_h\cdot\mathbf{n})_{\mathcal{E}^o_h}.
\]

On every element, we can write (\ref{eq6}) into a matrix
equation of the form
\[
\left(
\begin{array}{ccc}
\mathbf{A} & \mathbf{B} & \mathbf{D} \\
\mathbf{B}^t & \mathbf{E} & 0 \\
\mathbf{D}^t & 0 & 0 \\
\end{array}
\right) \left(
\begin{array}{c}
\bm{\sigma}_h \\
u_h \\
{\lambda}_h \\
\end{array}
\right) = \left(
\begin{array}{c}
\mathbf{F}_1 \\
\mathbf{F}_2 \\
0 \\
\end{array}
\right),
\]
where $\mathbf{F}_1$ incorporates the Dirichlet boundary data and $\mathbf{F}_2$ is the vector with respect to the right-hand side $f$. The local matrix $\mathbf{A,B,D,E}$ can be computed by their corresponding finite
element basis. $\mathbf{D}^t$ and $\mathbf{B}^t$  denote the complex conjugate transpose matrices of $\mathbf{D}$ and $\mathbf{B}$, respectively. Both the vectors of $\bm{\sigma}_h$ and $u_h$ can now be easily eliminated to obtain an equation for the multiplier only, namely,
\[
\mathbf{D}^t\mathbf{M}^{-1}\mathbf{D}\bm{\lambda_h}=\mathbf{G},
\]
where $\mathbf{M}$ and $\mathbf{G}$ are given by
\[
\mathbf{M}=\mathbf{A}-\mathbf{B}\mathbf{E}^{-1}\mathbf{B}^t,  \quad
\mathbf{G}=\mathbf{D}^t\mathbf{M}^{-1}(\mathbf{F}_1-\mathbf{B}\mathbf{E}^{-1}\textbf{F}_2).   
\]
Assembling the above equation on every elements, we can get the global
systems for solving the multiplier ${\lambda}_h$.

From the above, we can see that the HRTMDG method has several
advantages: (I) compared with the discontinuous Galerkin finite element method, the number of degrees of freedom of multiplier is remarkably
small; (II) once the multiplier ${\lambda}_h$ has been obtained, $\bm{\sigma}_h$ and $u_h$ can be efficiently computed element by element; (III) the matrix $\mathbf{D}^t\mathbf{M}^{-1}\mathbf{D}$ is symmetric and positive definite,
so we can solve the systems by using the conjugate gradient method.

\begin{thm}\label{thm1}
(Consistency) 
HRTMDG method (\ref{eq6})  is consistent. That is,  let $u$ be the solution of \eqref{eq1},  $\bm\sigma=i\nabla u/\kappa$, and $\lambda=u$. Then the variational equation \eqref{eq6} holds if $\bm\sigma_h$, $u_h$ and $\lambda_h$ are replaced by  $\bm\sigma$, $u$ and $\lambda$. 
\end{thm}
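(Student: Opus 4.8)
The plan is to reduce the statement to a direct verification. Since the form \eqref{eq6} results from the three equations of \eqref{eq5} by an element-wise integration by parts — legitimate for the exact pair $(\bm\sigma,u)$ thanks to the regularity \eqref{eq3} — it is enough to check that \eqref{eq5} holds after replacing $(\bm\sigma_h,u_h,\lambda_h)$ by $(\bm\sigma,u,\lambda)$, with $\bm\sigma=i\nabla u/\kappa$ and $\lambda$ the trace of $u$ on the interior faces (and $\lambda=0$ on $\mathcal{E}_h^o$, consistently with the definition of $\mathcal{M}_h$; the Dirichlet datum $g$ enters only through $\mathcal{F}$). As in \eqref{eq5}--\eqref{eq6}, an interface pairing $\la\cdot,\cdot\ra_{\mathcal{E}_h}$ whose argument is discontinuous across faces is read as $\sum_K\la\cdot,\cdot\ra_{\p K}$ with the outward unit normal $\mathbf{n}_K$ of $K$.

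Two of the three equations are immediate. The second equation of \eqref{eq5} follows by testing $i\kappa u+\nabla\cdot\bm\sigma=f$ (the second line of \eqref{eq2}, with $\nabla\cdot\bm\sigma\in L^2(\Omega)$ by \eqref{eq3}) against $v_h$ on each element and summing, which gives $-(i\kappa u,v_h)_{\mathcal{T}_h}-(\nabla\cdot\bm\sigma,v_h)_{\mathcal{T}_h}=-(f,v_h)_{\mathcal{T}_h}$. The third equation uses the $H(\up{div})$-conformity of the exact flux: $\la\bm\sigma\cdot\mathbf{n},\mu_h\ra_{\mathcal{E}_h}=\sum_K\la\bm\sigma\cdot\mathbf{n}_K,\mu_h\ra_{\p K}$ vanishes because on every interior face the two incident elements contribute with opposite normals while $\bm\sigma\cdot\mathbf{n}$ is single-valued there ($\bm\sigma\in H(\up{div};\Omega)$), and on every boundary face $\mu_h=0$.

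The first equation of \eqref{eq5} is the only one requiring computation. By the first line of \eqref{eq2}, $i\kappa\bm\sigma=-\nabla u$, so $(i\kappa\bm\sigma,\bm\tau_h)_{\mathcal{T}_h}=-(\nabla u,\bm\tau_h)_{\mathcal{T}_h}$, and the left-hand side becomes $-(\nabla u,\bm\tau_h)_{\mathcal{T}_h}-(u,\nabla\cdot\bm\tau_h)_{\mathcal{T}_h}+\la\lambda,\bm\tau_h\cdot\mathbf{n}\ra_{\mathcal{E}_h}$. The element-wise divergence theorem, $(\nabla u,\bm\tau_h)_K+(u,\nabla\cdot\bm\tau_h)_K=\la u,\bm\tau_h\cdot\mathbf{n}_K\ra_{\p K}$, collapses the first two terms to $-\sum_K\la u,\bm\tau_h\cdot\mathbf{n}_K\ra_{\p K}$, leaving $\sum_K\la\lambda-u,\bm\tau_h\cdot\mathbf{n}_K\ra_{\p K}$. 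Since $u\in H^1(\Omega)$ has a single-valued trace equal to $\lambda$ on each interior face, all interior contributions vanish; on a boundary face only one element contributes, and with $u=g$ on $\p\Omega$ and $\lambda=0$ on $\mathcal{E}_h^o$ the remainder is $-(g,\bm\tau_h\cdot\mathbf{n})_{\mathcal{E}_h^o}$ — exactly the right-hand side. Summing the three verified identities — which, after the element-wise integrations by parts, is precisely \eqref{eq6} — yields $\mathcal{A}(\bm\sigma,u,\lambda;\bm\tau_h,v_h,\mu_h)=\mathcal{F}(\bm\tau_h,v_h,\mu_h)$ for all $(\bm\tau_h,v_h,\mu_h)\in\mathcal{W}_h\times\mathcal{V}_h\times\mathcal{M}_h$.

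The step I expect to demand the most care is the bookkeeping of the interface terms: keeping straight which arguments are face-wise single-valued (the multipliers $\lambda$, $\mu_h$) and which are broken (the element quantities $u$, $v_h$, $\bm\tau_h$, $\bm\sigma$), always pairing against the outward normal $\mathbf{n}_K$, and treating the boundary faces correctly — in particular reading the hypothesis ``$\lambda=u$'' as ``$\lambda=u$ on $\mathcal{E}_h^i$ and $\lambda=0$ on $\mathcal{E}_h^o$'', so that $g$ enters on the right-hand side through $\mathcal{F}$ rather than through the multiplier. The analytic ingredients — validity of the element-wise integrations by parts and of the normal-trace duality $\la\bm\sigma\cdot\mathbf{n},\mu_h\ra_{\p K}$ for the exact solution — are routine once \eqref{eq3} (supplemented, if one wishes, by the piecewise smoothness assumed later for the error estimates) is invoked.
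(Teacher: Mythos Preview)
Your proof is correct and follows essentially the same route as the paper: both verify consistency by testing the three components $(\bm\tau_h,0,0)$, $(0,v_h,0)$, $(0,0,\mu_h)$ separately, using $i\kappa\bm\sigma=-\nabla u$ together with element-wise integration by parts for the first, the PDE $i\kappa u+\nabla\cdot\bm\sigma=f$ for the second, and the $H(\up{div})$-continuity of the normal flux for the third. The only organizational difference is that you check the pre-integrated form \eqref{eq5} and then invoke its equivalence with \eqref{eq6}, whereas the paper works with \eqref{eq6} directly; your explicit treatment of the boundary convention ($\lambda=u$ on $\mathcal{E}_h^i$, $\lambda=0$ on $\mathcal{E}_h^o$) is in fact more careful than the paper's own presentation.
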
 
 \begin{proof}
 Let $u$ denote the solution of \eqref{eq1}, and make substitutions as mentioned in Theorem \ref{thm1}. Taking $({\bm\tau}_h,v_h,\mu_h)=({\bm\tau}_h,0,0)$ in \eqref{eq6}, we can get
\[
\mathcal{A}(\f{i}{\kappa}\nabla u,u,u;{\bm\tau}_h,0,0)
=-(\nabla u, {\bm\tau}_h)_{\mathcal{T}_h}+(\nabla u,{\bm\tau}_h)_{\mathcal{T}_h}
-\langle u,{\bm\tau}_h\cdot\mathbf{n}\rangle_{\mathcal{E}^o_h}
=
-\langle u,{\bm\tau}_h\cdot\mathbf{n}\rangle_{\mathcal{E}^o_h}=-(g,{\bm\tau}_h\cdot\mathbf{n})_{\mathcal{E}^o_h}.
\]

Next, choosing $({\bm\tau}_h,v_h,\mu_h)=(0,v_h,0)$ in \eqref{eq6} and using Green's formula, we have
\[
\mathcal{A}(\f{i}{\kappa}\nabla u,u,u;0,v_h,0)
=-(i\kappa u,v_h)_{\mathcal{T}_h}
+(\f{i}{\kappa}\nabla u,\nabla v_h)_{\mathcal{T}_h}
-\langle \f{i}{\kappa}\nabla u\cdot\mathbf{n}, v_h\rangle_{\mathcal{E}_h}
=-\f{i}{\kappa}(\tr  u+\kappa^2 u,v_h)_{\mathcal{T}_h}
=-\f{i}{\kappa}(\tilde{f},v_h)_{\mathcal{T}_h},
\]
where we have used the fact that $u$ is the solution of \eqref{eq1} in the last equation.

Finally, testing with $ ({\bm\tau}_h,v_h,\mu_h)=(0,0,\mu_h)$, we can obtain
\[
\mathcal{A}(\f{i}{\kappa}\nabla u,u,u;0,0,\mu_h)
=\langle  \f{i}{\kappa}\nabla u\cdot\mathbf{n}, \mu_h\rangle_{\mathcal{E}_h}=0,
\]
which implies that the normal flux $\nabla u\cdot\mathbf{n}$ is continuous across element interfaces.
 \end{proof}
 
 \begin{thm}\label{thm2}
(Conservation) 
HRTMDG method (\ref{eq6}) is locally and globally conservative. 
\end{thm}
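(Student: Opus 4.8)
The plan is to read off both conservation identities directly from the HRTMDG equations \eqref{eq5} (which are equivalent to the form \eqref{eq6}) by testing with piecewise-constant functions, an admissible choice since $\mathbb{P}_0(K)\subset\mathbb{P}_k(K)$ for every $k\ge 0$. Concretely, I would first show that on each element $K\in\mathcal{T}_h$ the discrete solution obeys the \emph{local} balance
\[
i\kappa\int_K u_h\,dx+\int_{\p K}\bm\sigma_h\cdot\mathbf{n}\,ds=\int_K f\,dx ,
\]
the exact discrete counterpart of integrating $i\kappa u+\nabla\cdot\bm\sigma=f$ over $K$; then I would sum these identities over the mesh and show that the interior-face contributions cancel, so that the \emph{global} balance, that is, the same identity with $K$ replaced by $\Omega$, remains.

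For the local statement I would fix $K$ and take $v_h=\chi_K$, the characteristic function of $K$ (which belongs to $\mathcal{V}_h$), together with $\bm\tau_h=0$ and $\mu_h=0$, in the second equation of \eqref{eq5}. Because $\chi_K$ vanishes off $K$, this reduces the equation to $i\kappa(u_h,1)_K+(\nabla\cdot\bm\sigma_h,1)_K=(f,1)_K$; and since $\bm\sigma_h|_K\in RT_k(K)\subset H(\up{div};K)$, the divergence theorem on the single element $K$ rewrites $(\nabla\cdot\bm\sigma_h,1)_K$ as $\int_{\p K}\bm\sigma_h\cdot\mathbf{n}\,ds$, which is precisely the local balance. (Testing the form \eqref{eq6} with the same $v_h=\chi_K$ yields the identity even more directly, since \eqref{eq6} already carries the boundary term and $\nabla\chi_K=0$ on each element.)

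For the global statement I would sum the local balance over $K\in\mathcal{T}_h$, obtaining $i\kappa\int_\Omega u_h\,dx+\sum_{K\in\mathcal{T}_h}\int_{\p K}\bm\sigma_h\cdot\mathbf{n}_K\,ds=\int_\Omega f\,dx$, and regroup the skeleton sum face by face, so that an interior edge $e=\p K_1\cap\p K_2$ contributes $\int_e(\bm\sigma_h|_{K_1}\!\cdot\mathbf{n}_{K_1}+\bm\sigma_h|_{K_2}\!\cdot\mathbf{n}_{K_2})\,ds$. The crux — and the only point that is not pure bookkeeping — is that this jump vanishes: testing the third equation of \eqref{eq5} with multipliers $\mu_h\in\mathcal{M}_h$ supported on a single interior edge, and using that the normal trace of an $RT_k$ field on a face is a polynomial of degree $\le k$ and hence lies in the local multiplier space $\mathbb{P}_k(e)$, forces $\bm\sigma_h\cdot\mathbf{n}$ to be continuous across every $e\in\mathcal{E}^i_h$ (for conservation only the weaker fact that the jump has zero mean on $e$ is actually needed). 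With the interior terms gone, only the faces on $\p\Omega$ survive and one arrives at $i\kappa\int_\Omega u_h\,dx+\int_{\p\Omega}\bm\sigma_h\cdot\mathbf{n}\,ds=\int_\Omega f\,dx$. This continuity mechanism is the very one already exploited at the end of the proof of Theorem \ref{thm1}, so apart from that observation the argument is routine; I do not anticipate a genuine difficulty, only the need to state precisely what ``conservative'' means, as above.
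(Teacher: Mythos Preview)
Your proposal is correct and follows the same approach as the paper: test with $v_h=\chi_K$, $\bm\tau_h=0$, $\mu_h=0$ to obtain the local balance $i\kappa(u_h,1)_K+\sum_{e\subset\partial K}\langle\bm\sigma_h\cdot\mathbf{n},1\rangle_e=(f,1)_K$, and then infer the global statement. The only difference is one of detail: the paper simply asserts that global conservation follows from the local identity, whereas you spell out the summation over elements and invoke the third equation of \eqref{eq5} to kill the interior-face jumps of $\bm\sigma_h\cdot\mathbf{n}$---a step the paper leaves implicit.
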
 
\begin{proof}
Let $\chi_K$ denote the characteristic function of a set $K\subset{\Omega}$. Taking $ (\omega_h,v_h,\mu_h)=(0,\chi_K,0)$ in \eqref{eq6}, we can get
\[
\begin{split}
-(i\kappa u_h,1)_{K}
-\sum_{e\in\p K}\langle  \bm\sigma_h\cdot\mathbf{n}, 1\rangle_{e}=-(f,1)_K,
\end{split}
\]
the above equation implies that HRTMDG method (\ref{eq6}) keeps local mass balance, and hence it also keeps globally mass balance.
\end{proof}

\section{Existence and uniqueness}

In order to give the existence and uniqueness of HRTMDG method (\ref{eq6}), we first define the following mesh-dependent energy norm
\[
|\|({\bm\tau}, v,\mu)\||_{A}:=\left(\kappa\|{\bm\tau}\|^2_{\mathcal{T}_h}+\kappa \|u\|^2_{\mathcal{T}_h} +\f{1}{\kappa}\|\nabla u\|^2_{\mathcal{T}_h}+\f{1}{\kappa h}|\mu-v|^2_{\mathcal{E}_h}\right)^{\f{1}{2}}.
\] 
 
Next, we show the stability and boundedness of the bilinear form $\mathcal{A}(\cdot;\cdot)$ in the sense of energy norms $|\|\cdot\||_{A}$. The following result ( see Lemma 3.1 in \cite{ES2010} ) will be used.
\begin{lem}\label{lem1}
For given $v_h\in \mathcal{V}_h$ and $\mu_h\in\mathcal{M}_h$, there exists a unique solution $\tilde{{\bm\tau}}\in \mathcal{W}_h$ such that
\begin{equation}\label{eq7}
\begin{split}
&(\tilde{{\bm\tau}},p)_K=(\nabla v_h, p)_K \quad\forall p\in [\mathbb{P}_{k-1}(K)]^d,\\
&\langle \tilde{{\bm\tau}}\cdot \mathbf{n}, q\rangle_{\p K}= \langle \mu_h, q\rangle_{\p K} \quad\forall q\in\mathbb{P}_{k}(\p K),
\end{split}
\end{equation}
and the following estimate holds
\begin{equation}\label{eq8}
\|\tilde{{\bm\tau}}\|_{\mathcal{T}_h}\leq c_I\left(\|\nabla v_h\|^2_{\mathcal{T}_h}+h|\mu_h|^2_{\mathcal{E}_h}\right)^{\f{1}{2}},
\end{equation}
where  $c_I$ is  a constant independent of mesh size $h$ and $\kappa$.
\end{lem}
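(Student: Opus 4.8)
The plan is to prove Lemma~\ref{lem1} in two stages: first existence and uniqueness, by reducing \eqref{eq7} to the element level and invoking the unisolvence of the Raviart--Thomas space, and then the bound \eqref{eq8}, by a scaling argument on a reference simplex.

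Since the space $\mathcal{W}_h$ used in \eqref{eq6} imposes no interelement continuity --- its members are only required to belong to $RT_k(K)$ on each $K$ --- the two conditions in \eqref{eq7} decouple completely over $\mathcal{T}_h$, so it suffices to work on one fixed simplex $K$. There the prescribed data are the functionals $p\mapsto(\nabla v_h,p)_K$ on $[\mathbb{P}_{k-1}(K)]^d$ and $q\mapsto\langle\mu_h,q\rangle_{\p K}$ on $\mathbb{P}_k(\p K)$, which are exactly the canonical degrees of freedom of $RT_k(K)$, and whose number equals $\dim RT_k(K)$. By the classical unisolvence of Raviart--Thomas elements these data determine a unique $\tilde{{\bm\tau}}|_K\in RT_k(K)$; collecting the local solutions gives the unique $\tilde{{\bm\tau}}\in\mathcal{W}_h$. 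Note that $\kappa$ never enters \eqref{eq7}, so everything here is $\kappa$-independent.

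For \eqref{eq8} I would fix $K$, take an affine map $F_K:\hat K\to K$ from a reference simplex, and transport $\tilde{{\bm\tau}}$ by the contravariant Piola transform $\hat{{\bm\tau}}\mapsto (\det DF_K)^{-1}DF_K\,\hat{{\bm\tau}}\circ F_K^{-1}$, which maps $RT_k(\hat K)$ onto $RT_k(K)$ and is compatible with the structure of \eqref{eq7}. Pulling the data back, $\hat{{\bm\tau}}$ solves a system on $\hat K$ of the same form, with right-hand sides built from $\nabla\hat v_h$ (where $\hat v_h=v_h\circ F_K$) and from $\mu_h$ transported to $\p\hat K$, modified by geometric factors of $F_K$. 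On the finite-dimensional space $RT_k(\hat K)$ all norms are equivalent and the inverse of the degrees-of-freedom map is bounded, so $\|\hat{{\bm\tau}}\|_{\hat K}\le \hat c\,(\|\nabla\hat v_h\|_{\hat K}+|\hat\mu|_{\p\hat K})$ with $\hat c$ depending only on $k$ and $\hat K$. Mapping back with the standard Piola scaling relations ($|\det DF_K|\simeq h^d$, $\|DF_K\|\simeq h$, and $|e|\simeq h^{d-1}|\hat e|$ for a face $e$) and using shape-regularity, one rewrites these three quantities in terms of $\|\tilde{{\bm\tau}}\|_K$, $\|\nabla v_h\|_K$ and $|\mu_h|_{\p K}$; keeping track of the powers of $h$ shows that the face term enters with weight exactly $h$, that is, $\|\tilde{{\bm\tau}}\|_K\le c_I(\|\nabla v_h\|_K^2+h|\mu_h|_{\p K}^2)^{1/2}$ with $c_I$ independent of $h$ and $\kappa$. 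Squaring and summing over $K\in\mathcal{T}_h$ yields \eqref{eq8}.

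The only delicate point is this last bookkeeping step: the volume moments against $[\mathbb{P}_{k-1}(K)]^d$ and the normal moments against $\mathbb{P}_k(\p K)$ scale differently under the Piola transform, and one must verify that the resulting weight on $|\mu_h|^2_{\mathcal{E}_h}$ is $h^{1}$ rather than $h^{0}$ or $h^{2}$. The remaining ingredients --- decoupling over elements, unisolvence of $RT_k$, and norm equivalence on the reference element --- are entirely standard, and the $\kappa$-independence of $c_I$ is automatic since $\kappa$ does not appear in \eqref{eq7}.
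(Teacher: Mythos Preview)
The paper does not prove this lemma at all: it simply states the result and refers the reader to Lemma~3.1 of \cite{ES2010}. So there is no ``paper's own proof'' to compare against.

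Your argument is the standard one and is essentially what one finds in \cite{ES2010}. The two conditions in \eqref{eq7} are precisely the canonical Raviart--Thomas degrees of freedom on each $K$, so unisolvence gives existence and uniqueness immediately once you observe that the broken space $\mathcal{W}_h$ carries no interelement constraints. The bound \eqref{eq8} then follows from a reference-element norm equivalence plus Piola scaling, exactly as you describe, and the factor $h$ on the edge term is the correct outcome of that bookkeeping. Your flagging of the $\kappa$-independence as automatic is also right: $\kappa$ simply does not appear in the defining system. In short, your proposal is correct and is the expected proof; the paper just chose to quote the result rather than reproduce it.
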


\begin{lem}\label{thm3}
(Stability) There exists a positive constant $c_A$  that is independent of the mesh size $h$  and $\kappa$ such that 
\begin{equation}\label{eq9}
\sup_{({\bm\tau}_h,v_h,\mu_h)}
\f{\left|\mathcal{A}(\bm\sigma_h,u_h,\lambda_h;{\bm\tau}_h,v_h,\mu_h)\right|}
{|\|({\bm\tau}_h,v_h,\mu_h)\||_{A}}
\geq c_A|\|(\bm\sigma_h,u_h,\lambda_h)\||_{A}
\end{equation}
holds for all $(\bm\sigma_h,u_h,\lambda_h)\in\mathcal{W}_h\times\mathcal{V}_h\times\mathcal{M}_h$.
\end{lem}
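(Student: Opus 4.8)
The plan is to prove \eqref{eq9} as an inf--sup (Babuska--Necas) estimate by the standard route: for an arbitrary $(\bm\sigma_h,u_h,\lambda_h)\in\mathcal{W}_h\times\mathcal{V}_h\times\mathcal{M}_h$ I construct an explicit test triple $({\bm\tau}_h^{\ast},v_h^{\ast},\mu_h^{\ast})$ for which
\[
\up{Re}\,\mathcal{A}(\bm\sigma_h,u_h,\lambda_h;{\bm\tau}_h^{\ast},v_h^{\ast},\mu_h^{\ast})\ge c_1\,|\|(\bm\sigma_h,u_h,\lambda_h)\||_{A}^{2},\qquad |\|({\bm\tau}_h^{\ast},v_h^{\ast},\mu_h^{\ast})\||_{A}\le c_2\,|\|(\bm\sigma_h,u_h,\lambda_h)\||_{A},
\]
with $c_1,c_2>0$ independent of $h$ and $\kappa$; since $|\mathcal{A}|\ge\up{Re}\,\mathcal{A}$, estimate \eqref{eq9} then holds with $c_A=c_1/c_2$. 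The triple is assembled from three pieces.

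\emph{First piece.} Take $({\bm\tau}_h,v_h,\mu_h)=(i\bm\sigma_h,-iu_h,-i\lambda_h)$. Using $(u,v)_K=\int_K u\bar v$, a short calculation shows that the two mass terms collapse to $\kappa\|\bm\sigma_h\|^2_{\mathcal{T}_h}+\kappa\|u_h\|^2_{\mathcal{T}_h}$ (real, positive), while the gradient--coupling pair $(\bm\sigma_h,\nabla v_h)_{\mathcal{T}_h}+(\nabla u_h,{\bm\tau}_h)_{\mathcal{T}_h}$ and the hybrid pair $\la\lambda_h-u_h,{\bm\tau}_h\cdot\mathbf{n}\ra_{\mathcal{E}_h}+\la\bm\sigma_h\cdot\mathbf{n},\mu_h-v_h\ra_{\mathcal{E}_h}$ each equal a purely imaginary quantity times $i$, i.e. they contribute the real but sign--indefinite terms $-2\,\up{Im}(\bm\sigma_h,\nabla u_h)_{\mathcal{T}_h}$ and $+2\,\up{Im}\la\lambda_h-u_h,\bm\sigma_h\cdot\mathbf{n}\ra_{\mathcal{E}_h}$. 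Thus this piece alone delivers $\kappa\|\bm\sigma_h\|^2_{\mathcal{T}_h}+\kappa\|u_h\|^2_{\mathcal{T}_h}$ up to couplings. \emph{Second and third pieces.} Apply Lemma \ref{lem1} twice: with data $(\nabla u_h,0)$ to get $\tilde{\bm\tau}_1\in\mathcal{W}_h$ with $(\tilde{\bm\tau}_1,p)_K=(\nabla u_h,p)_K$, $\la\tilde{\bm\tau}_1\cdot\mathbf{n},q\ra_{\p K}=0$, $\|\tilde{\bm\tau}_1\|_{\mathcal{T}_h}\le c_I\|\nabla u_h\|_{\mathcal{T}_h}$; and with data $(0,\lambda_h-u_h)$ (which lies in $\mathbb{P}_k$ on each $\p K$) to get $\tilde{\bm\tau}_2\in\mathcal{W}_h$ with $(\tilde{\bm\tau}_2,p)_K=0$, $\la\tilde{\bm\tau}_2\cdot\mathbf{n},q\ra_{\p K}=\la\lambda_h-u_h,q\ra_{\p K}$, $\|\tilde{\bm\tau}_2\|_{\mathcal{T}_h}\le c_I h^{1/2}|\lambda_h-u_h|_{\mathcal{E}_h}$. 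Testing with $(\tilde{\bm\tau}_1,0,0)$ gives, by the moment conditions, $(\nabla u_h,\tilde{\bm\tau}_1)_{\mathcal{T}_h}=\|\nabla u_h\|^2_{\mathcal{T}_h}$, a vanishing hybrid term, and one extra term $(i\kappa\bm\sigma_h,\tilde{\bm\tau}_1)_{\mathcal{T}_h}$; testing with $(\tilde{\bm\tau}_2,0,0)$ gives $\la\lambda_h-u_h,\tilde{\bm\tau}_2\cdot\mathbf{n}\ra_{\mathcal{E}_h}=|\lambda_h-u_h|^2_{\mathcal{E}_h}$, a vanishing volume term, and $(i\kappa\bm\sigma_h,\tilde{\bm\tau}_2)_{\mathcal{T}_h}$. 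I then set
\[
({\bm\tau}_h^{\ast},v_h^{\ast},\mu_h^{\ast})=\Big(i\bm\sigma_h+\tfrac{\gamma_1}{\kappa}\,\tilde{\bm\tau}_1+\tfrac{\gamma_2}{\kappa h}\,\tilde{\bm\tau}_2,\;-iu_h,\;-i\lambda_h\Big),
\]
with fixed constants $\gamma_1,\gamma_2>0$; the scalings $\gamma_1/\kappa$ and $\gamma_2/(\kappa h)$ are chosen precisely so that the two new positive terms appear as $\frac{\gamma_1}{\kappa}\|\nabla u_h\|^2_{\mathcal{T}_h}$ and $\frac{\gamma_2}{\kappa h}|\lambda_h-u_h|^2_{\mathcal{E}_h}$, matching $|\|\cdot\||_{A}$, and so that the test--function bound $|\|({\bm\tau}_h^{\ast},v_h^{\ast},\mu_h^{\ast})\||_{A}\le c_2|\|(\bm\sigma_h,u_h,\lambda_h)\||_{A}$ follows from the triangle inequality, the two estimates of Lemma \ref{lem1}, and $h\le\up{diam}(\Omega)$, introducing no $\kappa$-dependence.

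Collecting real parts yields
\[
\up{Re}\,\mathcal{A}(\bm\sigma_h,u_h,\lambda_h;{\bm\tau}_h^{\ast},v_h^{\ast},\mu_h^{\ast})\ge\kappa\|\bm\sigma_h\|^2_{\mathcal{T}_h}+\kappa\|u_h\|^2_{\mathcal{T}_h}+\tfrac{\gamma_1}{\kappa}\|\nabla u_h\|^2_{\mathcal{T}_h}+\tfrac{\gamma_2}{\kappa h}|\lambda_h-u_h|^2_{\mathcal{E}_h}-R,
\]
where $R$ collects all indefinite couplings. Every term of $R$ has one of the two shapes $c\,\|\bm\sigma_h\|_{\mathcal{T}_h}\|\nabla u_h\|_{\mathcal{T}_h}$ and $c\,h^{-1/2}\|\bm\sigma_h\|_{\mathcal{T}_h}|\lambda_h-u_h|_{\mathcal{E}_h}$: the originals from the first piece after Cauchy--Schwarz and, for the edge term, the discrete trace/inverse inequality $|\bm\sigma_h\cdot\mathbf{n}|_{\mathcal{E}_h}\le C_{\up{tr}}h^{-1/2}\|\bm\sigma_h\|_{\mathcal{T}_h}$ on $RT_k$; and the two couplings $(i\kappa\bm\sigma_h,\tilde{\bm\tau}_j)_{\mathcal{T}_h}$ after Cauchy--Schwarz and the Lemma \ref{lem1} bounds. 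One then uses Young's inequality in the weighted form $ab\le\frac{t}{2}\kappa a^2+\frac{1}{2t\kappa}b^2$ (with $\kappa h$ replacing $\kappa$ for the second shape) and fixes the parameters in the order: first the Young weights, then $\gamma_1,\gamma_2$, so that after absorption the coefficients of $\kappa\|\bm\sigma_h\|^2_{\mathcal{T}_h}$, $\frac{1}{\kappa}\|\nabla u_h\|^2_{\mathcal{T}_h}$ and $\frac{1}{\kappa h}|\lambda_h-u_h|^2_{\mathcal{E}_h}$ all remain strictly positive; this gives $\up{Re}\,\mathcal{A}\ge c_1|\|(\bm\sigma_h,u_h,\lambda_h)\||_{A}^{2}$ and hence \eqref{eq9}.

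The step I expect to be the main obstacle is exactly this last one: the \emph{simultaneous} absorption of the indefinite couplings — in particular the edge coupling $2\,\up{Im}\la\lambda_h-u_h,\bm\sigma_h\cdot\mathbf{n}\ra_{\mathcal{E}_h}$, which after the trace inequality carries the factor $C_{\up{tr}}$ — into the single positive quantity $\kappa\|\bm\sigma_h\|^2_{\mathcal{T}_h}$, since the gradient coupling alone already comes close to saturating $2\|\bm\sigma_h\|\|\nabla u_h\|\le\kappa\|\bm\sigma_h\|^2+\frac{1}{\kappa}\|\nabla u_h\|^2$; the choices of $\gamma_1,\gamma_2$ and the Young weights are coupled and must be tracked carefully in terms of $c_I$ and $C_{\up{tr}}$ (both $h$- and $\kappa$-independent), so that none of the constants degenerates as $h\to0$ or $\kappa\to\infty$. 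A secondary, routine point to check is admissibility of the constructed triple — $-i\lambda_h\in\mathcal{M}_h$ and $\tilde{\bm\tau}_1,\tilde{\bm\tau}_2\in\mathcal{W}_h$ — and that all interface sums $\la\cdot,\cdot\ra_{\mathcal{E}_h}$ are read element-by-element, so that "$u_h$ on an interior edge'' always denotes the relevant one-sided trace.
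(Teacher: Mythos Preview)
Your strategy is essentially the paper's: combine the ``diagonal'' test $(i\bm\sigma_h,-iu_h,-i\lambda_h)$, which produces $\kappa\|\bm\sigma_h\|^2_{\mathcal{T}_h}+\kappa\|u_h\|^2_{\mathcal{T}_h}$, with a Raviart--Thomas lifting from Lemma~\ref{lem1}, which produces the two remaining pieces $\frac{1}{\kappa}\|\nabla u_h\|^2_{\mathcal{T}_h}$ and $\frac{1}{\kappa h}|\lambda_h-u_h|^2_{\mathcal{E}_h}$. The only structural difference is cosmetic: the paper applies Lemma~\ref{lem1} \emph{once} with the combined data $\big(\tfrac{1}{\kappa}\nabla u_h,\tfrac{1}{\kappa h}(\lambda_h-u_h)\big)$ to get a single $\tilde{\bm\tau}$, whereas you split into $\tilde{\bm\tau}_1,\tilde{\bm\tau}_2$. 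The single lifting is marginally cleaner (one cross term $(i\kappa\bm\sigma_h,\tilde{\bm\tau})$ instead of two, and one Young step with $\epsilon=1/c_I$), but there is no real gain either way.

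Where you are actually \emph{more} careful than the paper is precisely at the point you flag as the main obstacle. The paper asserts the identity
\[
\mathcal{A}(\bm\sigma_h,u_h,\lambda_h;i\bm\sigma_h,-iu_h,-i\lambda_h)=\kappa\|\bm\sigma_h\|^2_{\mathcal{T}_h}+\kappa\|u_h\|^2_{\mathcal{T}_h},
\]
i.e.\ it drops the very cross terms $-2\,\up{Im}(\bm\sigma_h,\nabla u_h)_{\mathcal{T}_h}+2\,\up{Im}\la\lambda_h-u_h,\bm\sigma_h\cdot\mathbf{n}\ra_{\mathcal{E}_h}$ that you correctly compute. So the ``simultaneous absorption'' difficulty you anticipate is not addressed in the paper at all; the paper simply combines its two displayed inequalities and declares the proof finished. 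Your plan is therefore not a different route but a more honest version of the same route, and your identified obstacle is real: with your ansatz the Young weights and $\gamma_1,\gamma_2$ are genuinely coupled through $c_I$ and $C_{\up{tr}}$, and the naive bound $2\|\bm\sigma_h\|\,\|\nabla u_h\|\le \kappa\|\bm\sigma_h\|^2+\tfrac{1}{\kappa}\|\nabla u_h\|^2$ already saturates the available $\kappa\|\bm\sigma_h\|^2$. Working this out carefully (e.g.\ first scaling the diagonal test, or using asymmetric Young weights so that most of the damage lands on $\tfrac{1}{\kappa}\|\nabla u_h\|^2$ and $\tfrac{1}{\kappa h}|\lambda_h-u_h|^2$, which can then be recovered by enlarging $\gamma_1,\gamma_2$) is exactly the missing step, in both your sketch and the paper's proof.
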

\begin{proof}
Taking $v_h=\f{1}{\kappa}u_h$ and $\mu_h=\f{1}{\kappa h}(\lambda_h-u_h)$ in \eqref{eq7}, we can obtain

\begin{equation}\label{eq10}
\begin{split}
(\tilde{{\bm\tau}},p)_K&=( \f{1}{\kappa}\nabla u_h, p)_K,\\
\langle \tilde{{\bm\tau}}\cdot \mathbf{n}, q\rangle_{\p K}&= \langle \f{1}{\kappa h}(\lambda_h-u_h), q\rangle_{\p K},
\end{split}
\end{equation}
and
\begin{equation}\label{eq11}
\|\tilde{{\bm\tau}}\|_{\mathcal{T}_h}\leq c_I\left(\f{1}{\kappa^2}\|\nabla u_h\|^2_{\mathcal{T}_h}+\f{1}{\kappa^2 h}|\lambda_h-u_h|^2_{\mathcal{E}_h}\right)^{\f{1}{2}}.
\end{equation}
For any $\epsilon>0$, we have
\[
\begin{split}
\mathcal{A}(\bm\sigma_h,u_h,\lambda_h;\epsilon\tilde{{\bm\tau}},0,0)
=&\epsilon(i\kappa \bm\sigma_h, \tilde{{\bm\tau}})_{\mathcal{T}_h}
+\epsilon(\nabla u_h,\tilde{{\bm\tau}})_{\mathcal{T}_h}
+\epsilon\langle \lambda_h-u_h,\tilde{{\bm\tau}}\cdot\mathbf{n}\rangle_{\mathcal{E}_h}\\
=&\epsilon(i\kappa \bm\sigma_h, \tilde{{\bm\tau}})_{\mathcal{T}_h}
+\epsilon\left(\f{1}{\kappa}\|\nabla u_h\|^2_{\mathcal{T}_h}
+\f{1}{\kappa h}| \lambda_h-u_h|^2_{\mathcal{E}_h})\right)\\
\geq &-\f{\kappa}{2}\|\bm\sigma_h\|^2_{\mathcal{T}_h}-\f{\kappa\epsilon^2}{2}\|\tilde{{\bm\tau}}\|^2_{\mathcal{T}_h}+\epsilon\left(\f{1}{\kappa}\|\nabla u_h\|^2_{\mathcal{T}_h}
+\f{1}{\kappa h}| \lambda_h-u_h|^2_{\mathcal{E}_h})\right)
\\\geq&-\f{\kappa}{2}\|\bm\sigma_h\|^2_{\mathcal{T}_h}+\left(\epsilon-\f{c_I^2\epsilon^2}{2}\right)\left(\f{1}{\kappa}\|\nabla u_h\|^2_{\mathcal{T}_h}
+\f{1}{\kappa h}| \lambda_h-u_h|^2_{\mathcal{E}_h})\right)
\end{split}
\]
where we have used \eqref{eq10} in the second equation and \eqref{eq11} in the second inequality. Set $\epsilon=1/c_I$. From \eqref{eq11} we have 
\begin{equation}\label{eq12}
\mathcal{A}(\bm\sigma_h,u_h,\lambda_h;\epsilon\tilde{{\bm\tau}},0,0)
\geq-\f{\kappa}{2}\|\bm\sigma_h\|^2_{\mathcal{T}_h}+\f{c_I}{2}\left(\f{1}{\kappa}\|\nabla u_h\|^2_{\mathcal{T}_h}
+\f{1}{\kappa h}| \lambda_h-u_h|^2_{\mathcal{E}_h})\right).
\end{equation}
Note that
\begin{equation}\label{eq13}
\mathcal{A}(\bm\sigma_h,u_h,\lambda_h;i\bm\sigma_h,-iu_h,-i\lambda_h)
=\kappa \|\bm\sigma_h\|^2_{\mathcal{T}_h}
+\kappa\| u_h\|^2_{\mathcal{T}_h}.
\end{equation}
Combining the above two inequalities for the two choices of test functions, we complete our proof of Lemma \ref{thm3}.
\end{proof}

\begin{lem}\label{thm4}
(Boundedness) There exists a constant $C_A$ independent of $h$  and $\kappa$, such that, 
\begin{equation}\label{eq14}
|\mathcal{A}(\bm\sigma_h,u_h,\lambda_h;{\bm\tau}_h,v_h,\mu_h)|\leq C_A|\|(\bm\sigma_h,u_h,\lambda_h)\||_{A}|\|({\bm\tau}_h,v_h,\mu_h)\||_{A}
\end{equation}
holds for all $(\bm\sigma_h,u_h,\lambda_h), ({\bm\tau}_h,v_h,\mu_h)\in\mathcal{W}_h\times\mathcal{V}_h\times\mathcal{M}_h$.
\end{lem}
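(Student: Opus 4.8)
The plan is to estimate the six terms of $\mathcal{A}(\bm\sigma_h,u_h,\lambda_h;{\bm\tau}_h,v_h,\mu_h)$ one at a time by the Cauchy--Schwarz inequality, distributing the weights in $\kappa$ and $h$ so that, for each term, one factor is controlled by $|\|(\bm\sigma_h,u_h,\lambda_h)\||_{A}$ and the other by $|\|({\bm\tau}_h,v_h,\mu_h)\||_{A}$. For the four volume terms this is immediate: $|(i\kappa\bm\sigma_h,{\bm\tau}_h)_{\mathcal{T}_h}|\le(\kappa^{1/2}\|\bm\sigma_h\|_{\mathcal{T}_h})(\kappa^{1/2}\|{\bm\tau}_h\|_{\mathcal{T}_h})$, $|(i\kappa u_h,v_h)_{\mathcal{T}_h}|\le(\kappa^{1/2}\|u_h\|_{\mathcal{T}_h})(\kappa^{1/2}\|v_h\|_{\mathcal{T}_h})$, $|(\bm\sigma_h,\nabla v_h)_{\mathcal{T}_h}|\le(\kappa^{1/2}\|\bm\sigma_h\|_{\mathcal{T}_h})(\kappa^{-1/2}\|\nabla v_h\|_{\mathcal{T}_h})$ and $|(\nabla u_h,{\bm\tau}_h)_{\mathcal{T}_h}|\le(\kappa^{-1/2}\|\nabla u_h\|_{\mathcal{T}_h})(\kappa^{1/2}\|{\bm\tau}_h\|_{\mathcal{T}_h})$, and in each case both factors occur among the terms defining the two energy norms.

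For the two interface terms I would use the standard discrete trace (inverse) inequality on shape-regular simplices, namely $\|{\bm\tau}_h\cdot\mathbf{n}\|_{L^2(\partial K)}\le c_t h_K^{-1/2}\|{\bm\tau}_h\|_{L^2(K)}$ for ${\bm\tau}_h\in RT_k(K)$, where $c_t$ depends only on $k$ and the shape-regularity of $\mathcal{T}_h$ and is in particular independent of $h$ and $\kappa$. Applying it elementwise and then summing with the discrete Cauchy--Schwarz inequality over $K\in\mathcal{T}_h$ gives $|{\bm\tau}_h\cdot\mathbf{n}|_{\mathcal{E}_h}\le c_t h^{-1/2}\|{\bm\tau}_h\|_{\mathcal{T}_h}$ and $|\bm\sigma_h\cdot\mathbf{n}|_{\mathcal{E}_h}\le c_t h^{-1/2}\|\bm\sigma_h\|_{\mathcal{T}_h}$. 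Consequently $|\langle\lambda_h-u_h,{\bm\tau}_h\cdot\mathbf{n}\rangle_{\mathcal{E}_h}|\le c_t\bigl((\kappa h)^{-1/2}|\lambda_h-u_h|_{\mathcal{E}_h}\bigr)\bigl(\kappa^{1/2}\|{\bm\tau}_h\|_{\mathcal{T}_h}\bigr)$ and $|\langle\bm\sigma_h\cdot\mathbf{n},\mu_h-v_h\rangle_{\mathcal{E}_h}|\le c_t\bigl(\kappa^{1/2}\|\bm\sigma_h\|_{\mathcal{T}_h}\bigr)\bigl((\kappa h)^{-1/2}|\mu_h-v_h|_{\mathcal{E}_h}\bigr)$, and again each factor is bounded by the relevant energy norm.

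Adding the six estimates and recombining — each factor being no larger than $|\|(\bm\sigma_h,u_h,\lambda_h)\||_{A}$ or $|\|({\bm\tau}_h,v_h,\mu_h)\||_{A}$ — yields \eqref{eq14} with $C_A$ a fixed multiple of $\max\{1,c_t\}$, which is independent of $h$ and $\kappa$. I do not anticipate a genuine difficulty in this argument: the only points that need care are confirming that the trace constant $c_t$ really carries no hidden dependence on $\kappa$ (it does not, being purely a geometric scaling constant that does not see the PDE) and keeping the $\kappa$- and $h$-bookkeeping strictly consistent with the definition of $|\|\cdot\||_{A}$ across all six estimates.
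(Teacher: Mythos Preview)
Your argument is correct and is precisely the ``standard argument'' the paper alludes to; the paper's own proof consists of a single sentence (``Using the standard arguments, we can easily get the above estimate'') with no further details, and your term-by-term Cauchy--Schwarz bookkeeping together with the discrete trace inequality for the two interface terms is exactly what fills that in.
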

\begin{proof}
Using the standard arguments, we can easily get the above estiamte.
\end{proof}

By the stability and boundedness of the bilinear form $\mathcal{A}(\cdot,\cdot)$, and Lax-Milgram theorem, we obtain the following result.
\begin{thm}
HRTMDG method \eqref{eq6} has a unique solution.
\end{thm}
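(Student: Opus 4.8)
The plan is to deduce existence and uniqueness from the abstract theory of linear problems on finite-dimensional spaces, using the inf--sup (stability) estimate of Lemma \ref{thm3} together with the boundedness of Lemma \ref{thm4}. Since $\mathcal{W}_h\times\mathcal{V}_h\times\mathcal{M}_h$ is finite-dimensional, it suffices to prove uniqueness of the solution of \eqref{eq6}; surjectivity of the associated linear map then follows automatically by the rank--nullity theorem, and existence of a solution to the (square) linear system follows. (Strictly speaking the name ``Lax--Milgram'' in the sentence preceding the theorem is a slight abuse, since $\mathcal{A}$ is not coercive in the classical symmetric sense; the correct tool is the generalized Lax--Milgram / Babu\v{s}ka--Ne\v{c}as theorem on the equivalence of well-posedness with an inf--sup condition, but on a finite-dimensional space the elementary argument above is enough.)

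First I would recall that the solution space and the test space coincide, both equal to $X_h:=\mathcal{W}_h\times\mathcal{V}_h\times\mathcal{M}_h$, which is finite-dimensional; write $N=\dim X_h$. The form $\mathcal{A}(\cdot;\cdot)$ is sesquilinear on $X_h\times X_h$, and by Lemma \ref{thm4} it is bounded with respect to $|\|\cdot\||_A$. Hence \eqref{eq6} is equivalent to a square linear system $\mathbf{A}\,\mathbf{x}=\mathbf{b}$ for the coefficient vector $\mathbf{x}\in\mathbb{C}^N$ of $(\bm\sigma_h,u_h,\lambda_h)$ in a fixed basis, where $\mathbf{A}\in\mathbb{C}^{N\times N}$ has entries built from $\mathcal{A}$ applied to basis functions and $\mathbf{b}$ is determined by $\mathcal{F}$. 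Such a system has a unique solution for every right-hand side if and only if the homogeneous system has only the trivial solution, i.e. if and only if $\mathcal{A}(\bm\sigma_h,u_h,\lambda_h;{\bm\tau}_h,v_h,\mu_h)=0$ for all $({\bm\tau}_h,v_h,\mu_h)\in X_h$ forces $(\bm\sigma_h,u_h,\lambda_h)=0$.

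To verify this last implication, suppose $(\bm\sigma_h,u_h,\lambda_h)\in X_h$ satisfies $\mathcal{A}(\bm\sigma_h,u_h,\lambda_h;{\bm\tau}_h,v_h,\mu_h)=0$ for all test functions. Then the numerator in the supremum on the left-hand side of \eqref{eq9} vanishes identically, so the whole left-hand side of \eqref{eq9} is $0$; the stability estimate then gives $c_A\,|\|(\bm\sigma_h,u_h,\lambda_h)\||_A\le 0$, and since $c_A>0$ this yields $|\|(\bm\sigma_h,u_h,\lambda_h)\||_A=0$. Because $|\|\cdot\||_A$ controls $\kappa\|\bm\sigma_h\|^2_{\mathcal{T}_h}+\kappa\|u_h\|^2_{\mathcal{T}_h}+\tfrac1\kappa\|\nabla u_h\|^2_{\mathcal{T}_h}+\tfrac1{\kappa h}|\lambda_h-u_h|^2_{\mathcal{E}_h}$ and $\kappa>0$, this forces $\bm\sigma_h=0$, $u_h=0$, and then $|\lambda_h-u_h|_{\mathcal{E}_h}=|\lambda_h|_{\mathcal{E}_h}=0$; since each $\mu_h\in\mathcal{M}_h$ is a piecewise polynomial on $\mathcal{E}_h$, $|\lambda_h|_{\mathcal{E}_h}=0$ implies $\lambda_h=0$. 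Hence the homogeneous problem has only the zero solution, so \eqref{eq6} has a unique solution for the given right-hand side $\mathcal{F}$, which proves the theorem.

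The genuinely substantive input here is the stability (inf--sup) estimate of Lemma \ref{thm3}; once that is granted, the remainder is a routine finite-dimensional argument, and the only points that need a word of care are (i) the observation that the trial and test spaces coincide so that the operator is represented by a \emph{square} matrix, letting us invoke rank--nullity rather than a full surjectivity argument, and (ii) the remark that $|\lambda_h|_{\mathcal{E}_h}=0$ actually forces $\lambda_h\equiv 0$ because $\mathcal{M}_h$ consists of polynomials on each edge (so the seminorm is in fact a norm on $\mathcal{M}_h$). No new obstacle is expected beyond correctly citing the generalized Lax--Milgram / Babu\v{s}ka framework in place of classical Lax--Milgram.
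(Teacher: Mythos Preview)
Your argument is correct and follows the same route as the paper, which simply invokes the stability of Lemma~\ref{thm3}, the boundedness of Lemma~\ref{thm4}, and the Lax--Milgram theorem without further detail. Your proposal is in fact more careful than the paper's one-line justification, rightly noting that the classical Lax--Milgram hypothesis of coercivity is replaced here by the inf--sup condition and that finite dimensionality (square system, rank--nullity) closes the argument.
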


\section{A uniform error estimate}
For $K\in\mathcal{T}_h$, $e\in\mathcal{E}_h$ and functions $u\in L^2(K)$ and $\lambda\in L^2(e)$, we define the local $L^2$-projection operaters $\Pi^K$ and $\Pi_e$ by
\[
\left\langle u-\Pi^Ku,v_h\right\rangle_{K}=0,\quad\forall v_h\in \mathbb{P}_{k}(K)
\]
and
\[
\left\langle \lambda-\Pi^e\lambda,\mu_h\right\rangle_{e}=0,\quad\forall \mu_h\in \mathbb{P}_{k}(e).
\]
The following error estimates hold:
\begin{equation}\label{eq15}
\begin{array}{rcll}
  \|u-\Pi^Ku\|_{K}& \leq  &  C^{*}h^s|u|_{s,K}, & 0\leq s\leq k+1,\\
  \|\nabla(u-\Pi^Ku)\|_{K}& \leq  & C^{*}h^s|u|_{s+1,K}, & 0\leq s\leq k,  \\
   \|u-\Pi^Ku\|_{\p K}+ \|u-\Pi^e u\|_{\p K}& \leq &   C^{*}h^{s+1/2}|u|_{s+1,K}, & 0\leq s\leq k
\end{array}
\end{equation}
where $C^{*}$ is a constant independent of $h$ and $\kappa$.

Similarly, the interpolation operators for functions on $\mathcal{T}_h$ and $\mathcal{E}_h$ are defined element-wise and are denoted by the same symbols.

We also introduce the Raviart-Thomas projection operator (see \cite{BF1991}) such that
\begin{equation}\label{eq16}
\begin{array}{rcll}
  (\bm\sigma-\Pi^{RT}\bm\sigma,p_h)_K& =  &0, &\forall p_h\in[\mathbb{P}_{k-1}(K)]^d ,  \\
 ((\bm\sigma-\Pi^{RT}\bm\sigma)\cdot\mathbf{n},\mu_h)_e&=   & 0, &\forall\mu_h\in \mathbb{P}_{k}(e),\quad e\in\p K.
\end{array}
\end{equation}
It is well known that, there exists a constant $C^{*}$ independent of $h$ and $\kappa$ such that the following approximate properties:
\begin{equation}\label{eq17}
\begin{array}{rcll}
  \|\bm\sigma-\Pi^{RT}\bm\sigma\|_{K}+ h^{1/2}\|\bm\sigma-\Pi^{RT}\bm\sigma\|_{\p K}& \leq  &  C^{*}h^s|\bm\sigma|_{s,K}, & 1/2\leq s\leq k+1,\\
  \|\nabla\cdot(\bm\sigma-\Pi^{RT}\bm\sigma)\|_{K}& \leq  & C^{*}h^s|\nabla\cdot\bm\sigma|_{s,K}, & 1\leq s\leq k+1.
\end{array}
\end{equation}

As we know, the error of HRTMDG method (\ref{eq6}) can be divided into two parts:  an approximate error and a discrete error. We need to estimate the discrete error.

\begin{thm}\label{thm5}
Let $(u,\bm\sigma)$  and $(\bm\sigma_h,u_h,\lambda_h)$ be the solutions of \eqref{eq2} and \eqref{eq6}, respectively. Then there exists a constant $C$ independent of $h$ and $\kappa$  such that
\begin{equation}\label{eq18}
|\|(\Pi^{RT}\bm\sigma-\bm\sigma_h,\Pi^Ku-u_h,\Pi^e u-\lambda_h)\||_{A}
\leq C\sqrt{\kappa} \|\Pi^{RT}\bm\sigma-\bm\sigma\|_{\mathcal{T}_h}.
\end{equation}
\end{thm}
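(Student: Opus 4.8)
The plan is to combine the consistency of the HRTMDG method (Theorem~\ref{thm1}) with the stability estimate (Lemma~\ref{thm3}) via a standard ``orthogonality plus inf--sup'' argument, and then bound the resulting consistency-error functional using only the Raviart--Thomas projection error of $\bm\sigma$. Write $\bm\xi_h:=\Pi^{RT}\bm\sigma-\bm\sigma_h$, $\zeta_h:=\Pi^Ku-u_h$, $\eta_h:=\Pi^e u-\lambda_h$. By Lemma~\ref{thm3} applied to $(\bm\xi_h,\zeta_h,\eta_h)\in\mathcal{W}_h\times\mathcal{V}_h\times\mathcal{M}_h$, it suffices to produce, for an arbitrary test triple $({\bm\tau}_h,v_h,\mu_h)$, a bound
\[
\bigl|\mathcal{A}(\bm\xi_h,\zeta_h,\eta_h;{\bm\tau}_h,v_h,\mu_h)\bigr|
\le C\sqrt{\kappa}\,\|\Pi^{RT}\bm\sigma-\bm\sigma\|_{\mathcal{T}_h}\,|\|({\bm\tau}_h,v_h,\mu_h)\||_A .
\]
Since by consistency $\mathcal{A}(\bm\sigma,u,u;{\bm\tau}_h,v_h,\mu_h)=\mathcal{F}({\bm\tau}_h,v_h,\mu_h)=\mathcal{A}(\bm\sigma_h,u_h,\lambda_h;{\bm\tau}_h,v_h,\mu_h)$, bilinearity gives
\[
\mathcal{A}(\bm\xi_h,\zeta_h,\eta_h;{\bm\tau}_h,v_h,\mu_h)
=\mathcal{A}(\Pi^{RT}\bm\sigma-\bm\sigma,\ \Pi^Ku-u,\ \Pi^e u-u;\ {\bm\tau}_h,v_h,\mu_h),
\]
so the whole estimate reduces to controlling the action of $\mathcal{A}$ on the projection errors of the exact solution.

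The key step is then to go through the six terms in $\mathcal{A}(\Pi^{RT}\bm\sigma-\bm\sigma,\ \Pi^Ku-u,\ \Pi^e u-u;\ {\bm\tau}_h,v_h,\mu_h)$ one at a time and show that every contribution involving $\Pi^Ku-u$ or $\Pi^e u-u$ vanishes by the defining orthogonality properties of the projections, while the surviving terms involve only $\Pi^{RT}\bm\sigma-\bm\sigma$. Concretely: the term $(\,i\kappa(\Pi^{RT}\bm\sigma-\bm\sigma),{\bm\tau}_h)_{\mathcal{T}_h}$ survives, but $({\bm\tau}_h)|_K\in RT_k(K)$ need not lie in $[\mathbb{P}_{k-1}(K)]^d$, so one must retain it and bound it by Cauchy--Schwarz as $\kappa\|\Pi^{RT}\bm\sigma-\bm\sigma\|_{\mathcal{T}_h}\|{\bm\tau}_h\|_{\mathcal{T}_h}\le\sqrt\kappa\|\Pi^{RT}\bm\sigma-\bm\sigma\|_{\mathcal{T}_h}\cdot\sqrt\kappa\|{\bm\tau}_h\|_{\mathcal{T}_h}$, the last factor being controlled by $|\|({\bm\tau}_h,v_h,\mu_h)\||_A$. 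The term $-(i\kappa(\Pi^Ku-u),v_h)_{\mathcal{T}_h}$ vanishes because $v_h\in\mathbb{P}_k(K)$ and $\Pi^K$ is the $L^2$-projection onto $\mathbb{P}_k(K)$. The term $(\Pi^{RT}\bm\sigma-\bm\sigma,\nabla v_h)_{\mathcal{T}_h}$ vanishes because $\nabla v_h\in[\mathbb{P}_{k-1}(K)]^d$ and \eqref{eq16} (first line) annihilates it. The term $(\nabla(\Pi^Ku-u),{\bm\tau}_h)_{\mathcal{T}_h}$ requires an integration by parts: write it as $-(\Pi^Ku-u,\nabla\cdot{\bm\tau}_h)_{\mathcal{T}_h}+\langle \Pi^Ku-u,{\bm\tau}_h\cdot\mathbf{n}\rangle_{\mathcal{E}_h}$; the volume part vanishes since $\nabla\cdot{\bm\tau}_h|_K\in\mathbb{P}_k(K)$, and the boundary part will cancel against part of the penalty term below. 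The penalty term $\langle(\Pi^e u-u)-(\Pi^Ku-u),\ {\bm\tau}_h\cdot\mathbf{n}\rangle_{\mathcal{E}_h}$: here $\langle\Pi^e u-u,{\bm\tau}_h\cdot\mathbf{n}\rangle_{\mathcal{E}_h}=0$ because ${\bm\tau}_h\cdot\mathbf{n}|_e\in\mathbb{P}_k(e)$ and $\Pi^e$ is the $L^2(e)$-projection onto $\mathbb{P}_k(e)$; and $-\langle\Pi^Ku-u,{\bm\tau}_h\cdot\mathbf{n}\rangle_{\mathcal{E}_h}$ exactly cancels the boundary term produced in the previous step. Finally the term $\langle(\Pi^{RT}\bm\sigma-\bm\sigma)\cdot\mathbf{n},\ \mu_h-v_h\rangle_{\mathcal{E}_h}$ vanishes by \eqref{eq16} (second line), since $\mu_h|_e\in\mathbb{P}_k(e)$ and $v_h|_e\in\mathbb{P}_k(e)$. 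Thus after the cancellations only the single term $(i\kappa(\Pi^{RT}\bm\sigma-\bm\sigma),{\bm\tau}_h)_{\mathcal{T}_h}$ remains, giving exactly the desired one-sided bound, and Lemma~\ref{thm3} closes the argument.

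The main obstacle I anticipate is the bookkeeping of the boundary terms: one must be careful that the integration-by-parts in the $(\nabla(\Pi^Ku-u),{\bm\tau}_h)_{\mathcal{T}_h}$ term produces a jump/interface sum over $\mathcal{E}_h$ that matches \emph{exactly} (including sign and the fact that $u$, hence $\Pi^Ku-u$, is single-valued on interior edges since $u\in H^1(\Omega)$) the term $-\langle\Pi^Ku-u,{\bm\tau}_h\cdot\mathbf{n}\rangle_{\mathcal{E}_h}$ coming from the penalty pairing; the treatment on $\mathcal{E}_h^o$ also needs checking against the definition of $\mathcal{M}_h$ ($\mu_h=0$ there) and of $\lambda$ ($\lambda=u=g$ there). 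A secondary point is verifying that the degrees used in each projection definition ($\mathbb{P}_k$ for $\Pi^K$, $\Pi^e$; $[\mathbb{P}_{k-1}]^d$ and $\mathbb{P}_k(e)$ for $\Pi^{RT}$) are precisely the spaces into which the corresponding test-function pieces fall, so that each orthogonality is legitimately applicable; this is where the specific choice $RT_k$ coupled with $\mathbb{P}_k$ and the $\mathbb{P}_k(e)$ multiplier space is essential and must be invoked explicitly.
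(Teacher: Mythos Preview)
Your proposal is correct and follows essentially the same route as the paper's proof: inf--sup stability (Lemma~\ref{thm3}) plus Galerkin orthogonality reduce the estimate to $\mathcal{A}(\Pi^{RT}\bm\sigma-\bm\sigma,\Pi^Ku-u,\Pi^eu-u;{\bm\tau}_h,v_h,\mu_h)$, and then the projection definitions kill every term except $(i\kappa(\Pi^{RT}\bm\sigma-\bm\sigma),{\bm\tau}_h)_{\mathcal{T}_h}$. The only cosmetic difference is that the paper integrates \emph{both} volume terms by parts (so it uses the commuting-diagram identity $(\nabla\cdot(\Pi^{RT}\bm\sigma-\bm\sigma),v_h)_K=0$ in place of your $(\Pi^{RT}\bm\sigma-\bm\sigma,\nabla v_h)_K=0$), whereas you integrate only $(\nabla(\Pi^Ku-u),{\bm\tau}_h)$ by parts; the resulting cancellations are identical.
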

\begin{proof}
Using the stability of the bilinear form and Galerkin orthogonality, we have \begin{equation}\label{eq19}
\begin{split}
c_A|\|(\Pi^{RT}\bm\sigma-\bm\sigma_h,\Pi^Ku-u_h,\Pi^e u-\lambda_h)\||_{A}
\leq &\sup_{({\bm\tau}_h,v_h,\mu_h)}
\f{\left|\mathcal{A}(\Pi^{RT}\bm\sigma-\bm\sigma_h,\Pi^Ku-u_h,\Pi^e u-\lambda_h;{\bm\tau}_h,v_h,\mu_h)\right|}
{|\|({\bm\tau}_h,v_h,\mu_h)\||_{A}}
\\= &\sup_{({\bm\tau}_h,v_h,\mu_h)}
\f{\left|\mathcal{A}(\Pi^{RT}\bm\sigma-\bm\sigma,\Pi^Ku-u,\Pi^eu- u;{\bm\tau}_h,v_h,\mu_h)\right|}{|\|({\bm\tau}_h,v_h,\mu_h)\||_{A}}.
\end{split}
\end{equation}

In fact, utilizing the definitions of the bilinear form $\mathcal{A}$ and projection operators, we can get
\begin{equation}\label{eq21}
\begin{split}
& \mathcal{A}(\Pi^{RT}\bm\sigma-\bm\sigma,\Pi^Ku-u,\Pi^eu- u;{\bm\tau}_h,v_h,\mu_h)
\\&\quad=(i\kappa (\Pi^{RT}\bm\sigma-\bm\sigma), {\bm\tau}_h)_{\mathcal{T}_h}
-(i\kappa (\Pi^Ku-u),v_h)_{\mathcal{T}_h}
+(\Pi^{RT}\bm\sigma-\bm\sigma,\nabla v_h)_{\mathcal{T}_h}
+(\nabla (\Pi^Ku-u),{\bm\tau}_h)_{\mathcal{T}_h}
\\&\quad\quad+\langle (\Pi^eu- u)- (\Pi^Ku-u),{\bm\tau}_h\cdot\mathbf{n}\rangle_{\mathcal{E}_h}
+\langle(\Pi^{RT}\bm\sigma-\bm\sigma)\cdot\mathbf{n}, \mu_h-v_h\rangle_{\mathcal{E}_h}
\\&\quad=(i\kappa (\Pi^{RT}\bm\sigma-\bm\sigma), {\bm\tau}_h)_{\mathcal{T}_h}
-(i\kappa (\Pi^Ku-u),v_h)_{\mathcal{T}_h}
-(\nabla\cdot(\Pi^{RT}\bm\sigma-\bm\sigma), v_h)_{\mathcal{T}_h}-(\Pi^Ku-u,\nabla\cdot{\bm\tau}_h)_{\mathcal{T}_h}
\\&\quad\quad+\langle \Pi^eu- u,{\bm\tau}_h\cdot\mathbf{n}\rangle_{\mathcal{E}_h}
+\langle(\Pi^{RT}\bm\sigma-\bm\sigma)\cdot\mathbf{n}, \mu_h\rangle_{\mathcal{E}_h}
=(i\kappa (\Pi^{RT}\bm\sigma-\bm\sigma), {\bm\tau}_h)_{\mathcal{T}_h}.
\end{split}
\end{equation}
Substituting \eqref{eq21} into \eqref{eq19}, we  get the estimate \eqref{eq18}.
\end{proof}
 
So, we obtain the following optimal a priori error estimate.
\begin{thm}\label{thm7}
Let $(u,\bm\sigma)$  and $(\bm\sigma_h,u_h,\lambda_h)$ be the solutions of \eqref{eq2} and \eqref{eq6}, respectively. Then there exists a constant $C$ independent of the mesh size $h$ and $\kappa$ such that 
\begin{equation}\label{eq23}
\|\bm\sigma-\bm\sigma_h\|_{\mathcal{T}_h}+\|u-u_h\|_{\mathcal{T}_h}
\leq Ch^s\{|\bm\sigma|_{s,\mathcal{T}_h}+ |u|_{s,\mathcal{T}_h}\},\quad 1/2\leq s\leq k+1.
\end{equation}
\end{thm}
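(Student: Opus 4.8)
The plan is to combine the discrete error bound from Theorem~\ref{thm5} with the approximation properties \eqref{eq15} and \eqref{eq17} via a triangle inequality. First I would split
\[
\|\bm\sigma-\bm\sigma_h\|_{\mathcal{T}_h}\leq \|\bm\sigma-\Pi^{RT}\bm\sigma\|_{\mathcal{T}_h}+\|\Pi^{RT}\bm\sigma-\bm\sigma_h\|_{\mathcal{T}_h},\qquad
\|u-u_h\|_{\mathcal{T}_h}\leq \|u-\Pi^Ku\|_{\mathcal{T}_h}+\|\Pi^Ku-u_h\|_{\mathcal{T}_h}.
\]
The first term on each right-hand side is controlled directly by the interpolation estimates: $\|\bm\sigma-\Pi^{RT}\bm\sigma\|_{\mathcal{T}_h}\leq C^{*}h^s|\bm\sigma|_{s,\mathcal{T}_h}$ from \eqref{eq17} and $\|u-\Pi^Ku\|_{\mathcal{T}_h}\leq C^{*}h^s|u|_{s,\mathcal{T}_h}$ from \eqref{eq15}, for $1/2\leq s\leq k+1$ (the $\bm\sigma$ estimate already forces $s\geq 1/2$, which is why that is the stated lower range).

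Next I would bound the discrete-error contributions using the energy norm. By definition of $|\|\cdot\||_A$ we have, up to the factor $\sqrt{\kappa}$,
\[
\sqrt{\kappa}\,\|\Pi^{RT}\bm\sigma-\bm\sigma_h\|_{\mathcal{T}_h}\leq |\|(\Pi^{RT}\bm\sigma-\bm\sigma_h,\Pi^Ku-u_h,\Pi^e u-\lambda_h)\||_{A},\qquad
\sqrt{\kappa}\,\|\Pi^Ku-u_h\|_{\mathcal{T}_h}\leq |\|(\Pi^{RT}\bm\sigma-\bm\sigma_h,\Pi^Ku-u_h,\Pi^e u-\lambda_h)\||_{A},
\]
since each summand inside the square root is nonnegative. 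Applying Theorem~\ref{thm5} then gives
\[
\|\Pi^{RT}\bm\sigma-\bm\sigma_h\|_{\mathcal{T}_h}+\|\Pi^Ku-u_h\|_{\mathcal{T}_h}\leq \frac{2}{\sqrt{\kappa}}\,|\|(\cdots)\||_{A}\leq \frac{2C}{\sqrt{\kappa}}\cdot C\sqrt{\kappa}\,\|\Pi^{RT}\bm\sigma-\bm\sigma\|_{\mathcal{T}_h}=\tilde{C}\,\|\bm\sigma-\Pi^{RT}\bm\sigma\|_{\mathcal{T}_h},
\]
where the crucial point is that the $\sqrt{\kappa}$ factors cancel exactly, so $\tilde{C}$ is independent of $\kappa$. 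Finally, bounding $\|\bm\sigma-\Pi^{RT}\bm\sigma\|_{\mathcal{T}_h}$ by \eqref{eq17} once more and collecting all four pieces yields \eqref{eq23} with $C=(1+\tilde{C})C^{*}$.

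I do not expect a genuine obstacle here; this is essentially bookkeeping with the triangle inequality. The one thing to be careful about is the range of $s$: the bound for $|u|_{s,\mathcal{T}_h}$ comes from the \emph{first} line of \eqref{eq15}, which is valid for $0\leq s\leq k+1$, while the $\bm\sigma$ bound restricts us to $s\geq 1/2$, so the intersection $1/2\leq s\leq k+1$ is exactly the stated range and no regularity beyond $H^s$ on $\bm\sigma$ and $u$ is needed. It is also worth noting that we only use the \emph{first} estimate in \eqref{eq17} (the $L^2$ bound on $\bm\sigma-\Pi^{RT}\bm\sigma$), not the divergence estimate, consistent with the fact that Theorem~\ref{thm5} already absorbed everything else. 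If one wanted the estimate in terms of $|u|_{s+1,\mathcal{T}_h}$ (controlling $\nabla u$ as well), the energy norm would supply that through its $\tfrac1\kappa\|\nabla u\|^2$ term, but for the $L^2$-type statement \eqref{eq23} as written the argument above suffices.
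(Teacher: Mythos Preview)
Your proposal is correct and follows essentially the same approach as the paper's proof: extract the $L^2$ pieces from the energy norm (with the $\sqrt{\kappa}$ factors cancelling against the $\sqrt{\kappa}$ in Theorem~\ref{thm5}), then combine with the approximation properties \eqref{eq15} and \eqref{eq17} via the triangle inequality. Your write-up is in fact more explicit than the paper's about the $\kappa$-cancellation; the only blemish is a stray extra constant in the displayed chain (it should read $\tfrac{2}{\sqrt{\kappa}}\cdot C\sqrt{\kappa}$, not $\tfrac{2C}{\sqrt{\kappa}}\cdot C\sqrt{\kappa}$).
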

\begin{proof}
Using the definition of the norm $|\|\cdot\||_{A}$ and the estimate \eqref{eq18}, we get
\[
\|\Pi^{RT}\bm\sigma-\bm\sigma_h\|_{\mathcal{T}_h}+\|\Pi^Ku-u_h\|_{\mathcal{T}_h}
\leq C\|\Pi^{RT}\bm\sigma-\bm\sigma\|_{\mathcal{T}_h}.
\]
Combining the above estimate with the approximate properties \eqref{eq15} and \eqref{eq17}, we obtain the estimate \eqref{eq23}.
\end{proof}

\section*{Acknowledgments}
J. Zhang's work was supported partially by the Major Scientific and Technological Projects of CNPC under Grant (ZD2019-183-008), the Natural Science Foundation of Shandong Province (ZR2019MA015) and the Fundamental Research Funds for the Central Universities  (20CX05011A). J. Zhu's work was supported partially by the National Council for Scientific and Technological Development (CNPq).

%%%%%%%%%%%%%%%%%%%%%%%%%%%%%%%%%%%%%%%%%

\section*{References}

\end{document}